\newtheorem{Theorem}{Theorem}[section]
\newtheorem{Lemma}[Theorem]{Lemma}
\newtheorem{Proposition}[Theorem]{Proposition}
\newtheorem{Claim}[Theorem]{Claim}
\def\fa{{\mathcal{F}}}
\def\vr{{\varphi}}
\def\ov{\overline}
\def\bc{{\mathbb{C}}}
\def\bn{{\mathbb{N}}}
\def\GL{\operatorname{{GL}}}
\def\Diff{\operatorname{{Diff}}}
\def\Hol{\operatorname{{Hol}}}
\def\Ker{\operatorname{{Ker}}}
\def\Id{\operatorname{{Id}}}
\def\Diff{\operatorname{{Diff}}}
\def\SO(3){\operatorname{{SO(3)}}}
\def\bc{\operatorname{{\mathbb C}}}
\def\Hol{\operatorname{{Hol}}}
\def\fa{\operatorname{{\mathcal F}}}
\def\Diff{\operatorname{{Diff}}}
\def\ov\bc{\operatorname{\overline{\mathbb{C}}}}
\title[Stable compact leaves]{On the existence of  stable compact leaves for transversely holomorphic foliations}
\author{Bruno Scardua}
\begin{document}

\maketitle

\section{Introduction}

One of the  most important   results in the theory of foliations is
the celebrated Local Stability Theorem of Reeb (see for instance
\cite{Godbillon, Reeb1}): {\sl
A compact leaf of a foliation having finite holonomy group is
stable, {\it indeed}, it admits a fundamental system of invariant
neighborhoods where each leaf is compact with finite holonomy
group}\footnote{Key words and phrases:  Holomorphic foliation, holonomy, stable leaf.}. This result, together with
 Reeb's Global Stability Theorem (for codimension one real foliations) has many important consequences and motivates
 several questions in the theory of foliations.
 Recall that a leaf
$L$ of a {\it compact} foliation $\fa$  is {\it stable\/} if it
has a fundamental system of saturated neighborhoods
(cf.\cite{Godbillon} page 376). The stability of a compact leaf $L\in \fa$ is equivalent to
finiteness of its holonomy group
 $\Hol(\fa,L)$ and is also equivalent to the existence of a local bound for the volume of the
leaves close to $L$ (\cite{Godbillon}, Proposition 2.20, page 103).   As a converse to the above, Reeb has proved that, for codimension one  smooth foliations, a  compact leaf that admits a neighborhood  consisting of compact leaves, is
 necessarily of finite holonomy. This is not true however in
 codimension $\geq 2$.

 \par  Some interesting questions arise from these deep results. One is the
following: {\it If a codimension one smooth foliation on a compact
manifold has  infinitely many compact leaves then is it true that
all leaves are compact?} The answer is clearly no, but this  is true
for (transversely) real analytic foliations of codimension one on compact manifolds.
On the other, there are versions of  Reeb stability results  for the class of {\em holomorphic}
 foliations (\cite{Brunella,Santos-Scardua}). In the holomorphic framework,  it is proved in
\cite{Brunella-Nicolau}
 that a (non-singular) transversely holomorphic codimension one
 on a compact connected manifold admitting
infinitely many compact leaves exhibits a transversely meromorphic
first integral. In \cite{Ghys} the author proves a similar result,
if a (possibly singular) codimension one holomorphic foliations on a
compact manifold has infinitely many closed leaves then it admits a
meromorphic first integral and in particular, all leaves are closed.
The problem of bounding the number of closed leaves of a holomorphic
foliation is known (at least in the complex algebraic framework) as
{\it Jouanolou's problem}, thanks to the pioneering results in
\cite{Jouanolou} and has a wide range of contributions and
applications in the Algebraic-geometric setting.

\vglue.2in From the more geometrical point of view, some interesting
questions arise from the above results. In \cite{Santos-Scardua} it
is proved a global stability theorem for codimension $q\geq 1$ holomorphic foliations transverse
to fibrations.  In this paper we focus on the problem of  existence
of a suitable compact leaf under the hypothesis of existence of a
sufficiently large number of compact leaves.

We recall that a subset $X\subset M$ of a differentiable $m$-manifold has {\it zero measure on $M$} if $M$ admits an open cover by coordinate charts $\vr \colon U \subset M \to \vr(U)\subset \mathbb R^m$ such that $\vr(U\cap X)$ has zero measure (with respect to the standard Lebesgue measure in $\mathbb R^m$.
Our results are stated
in terms of positive measure and we prove the following theorems:

\begin{Theorem}
\label{Theorem:measuremainfoliations} Let $\fa$ be a transversely
holomorphic foliation on a compact connected complex manifold $M$.
 Denote by $\Omega(\fa)\subset M$ the subset of compact
leaves of $\fa$.  Then we have two possibilities:
\begin{itemize}

\item[{\rm (i)}] $\fa$ has some compact leaf with
finite holonomy group.
\item[{\rm(ii)}] The set $\Omega(\fa)$ has zero measure.
\end{itemize}
\end{Theorem}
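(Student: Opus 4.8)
The plan is to prove the contrapositive: assuming that $\Omega(\fa)$ has positive measure in $M$, I will exhibit a compact leaf whose holonomy group is finite, thereby forcing alternative (i). Write $q$ for the complex codimension of $\fa$. First I would run a Fubini-type argument in a foliated chart $U \cong T \times P$ (transversal $T \subset \mathbb{C}^q$ times plaque $P$): since $\Omega(\fa)$ is a saturated set of positive measure, some chart contributes positive measure, and integrating over the plaque direction produces a transversal $\Sigma \cong (\mathbb{C}^q,0)$ meeting $\Omega(\fa)$ in a set of positive $2q$-dimensional Lebesgue measure. Choosing a Lebesgue density point $p_0$ of $\Sigma \cap \Omega(\fa)$ that itself lies in $\Omega(\fa)$, I obtain a compact leaf $L_0$ through $p_0$ such that in every neighborhood of $p_0$ in $\Sigma$ the points lying on compact leaves still form a set of positive measure. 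Since $L_0$ is compact, $\pi_1(L_0)$ is finitely generated, hence so is the holonomy group $G := \Hol(\fa, L_0)$, realized as a finitely generated group of germs of biholomorphisms of $(\Sigma, p_0) \cong (\mathbb{C}^q, 0)$.

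The central observation is that a point $p \in \Sigma$ lying on a compact leaf $L$ has finite $G$-orbit: holonomy transport carries $p$ to points on the same leaf, so $G\cdot p$ is contained in the finite set $L \cap \Sigma$. Consequently, for each $g \in G$ the sequence $g^k(p)$ assumes finitely many values, so $p$ is a periodic point of $g$. Fixing any $g \in G$, the set of its periodic points is $\bigcup_{m\geq 1}\{p : g^m(p)=p\}$, a countable union of analytic subsets; if $g$ has infinite order then each $\{g^m = \Id\}$ is a proper analytic set and the union has measure zero. As the positive-measure set of compact-leaf points near $p_0$ is contained in the periodic set of $g$, I conclude that every $g \in G$ has finite order, i.e. $G$ is a torsion group. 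The main obstacle is the bookkeeping with the shrinking domains of germs: one must work in a fixed relatively compact transversal disk and restrict attention to compact leaves close enough to $L_0$ that their finite orbits genuinely remain inside the common domain of the chosen generators, so that the periodicity and measure-zero statements are rigorous.

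Finally I would promote torsion to finiteness. Consider the derivative homomorphism $G \to \GL(q,\mathbb{C})$, $g \mapsto Dg(p_0)$. Its kernel consists of germs tangent to the identity; but a finite-order germ tangent to the identity must be the identity, since the lowest-order nonlinear homogeneous term of $g$ gets multiplied by $m$ in $g^m$ and hence cannot vanish unless it was already zero. Thus the derivative map is injective on the torsion group $G$, which therefore embeds as a finitely generated torsion subgroup of $\GL(q,\mathbb{C})$. By Schur's theorem such a subgroup is finite, so $G = \Hol(\fa, L_0)$ is finite. Hence $L_0$ is a compact leaf with finite holonomy group, which is exactly alternative (i), completing the contrapositive.
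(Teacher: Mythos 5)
Your overall strategy (contrapositive, localization on a transversal, periodicity of holonomy elements via analyticity of fixed-point sets, then finiteness of the holonomy group) follows the paper's proof, and your endgame is sound: the derivative homomorphism is injective on torsion germs, $\pi_1(L_0)$ is finitely generated because $L_0$ is compact, and Schur's theorem finishes (the paper instead extracts a \emph{uniform} exponent and invokes Burnside, which does not need finite generation; both routes appear in the paper's Lemma~\ref{Lemma:finiteexponentgerms}). The genuine gap is precisely the step you set aside as ``bookkeeping'': the claim that every compact-leaf point $p$ near $p_0$ is a periodic point of a given holonomy germ $g$. A germ only has a representative $g\colon W\to\Sigma$ on some disk $W$, and the iterate $g^{k+1}(p)$ is defined only if $g^k(p)$ lands back in $W$. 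The orbit of $p$ is indeed contained in the finite set $L_p\cap\Sigma$, but nothing prevents it from escaping $W$ before it closes up, so ``finite orbit $\Rightarrow$ periodic point of $g$'' does not follow. Your proposed repair --- restrict to compact leaves ``close enough to $L_0$'' --- cannot work as stated: closeness of a leaf to $L_0$ gives no bound on $\#(L\cap\Sigma)$, hence no control on how many iterates are needed before the orbit returns, and no guarantee that a positive-measure set of points survives the restriction. Any direct attempt is circular: you must fix in advance how many iterates of $g$ are to be defined (in order to shrink $W$), yet the number needed depends on the leaf. (A lesser issue: you invoke the Lebesgue density theorem for $\Sigma\cap\Omega(\fa)$, whose measurability is never established; the paper's Lindel\"of-type localization claim avoids this.)

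The missing idea --- which is the paper's key device --- is to stratify \emph{before} localizing: write the set of compact leaves as the countable union $\Omega(\fa,T)=\bigcup_{n}\Omega(\fa,T,n)$, where $\Omega(\fa,T,n)$ consists of the leaves meeting a fixed finite system of relatively compact transversals $T$ in at most $n$ points. Since a countable union of zero-measure sets has zero measure, some single stratum $\Omega(\fa,T,n)$ fails to have zero measure; one then localizes at a compact leaf $L_0$ near which this \emph{fixed} stratum has positive measure in every neighborhood, and only afterwards chooses, for each holonomy germ $h$, a disk $W$ on which $h,h^2,\dots,h^{n+1}$ are all defined. For $z$ in the stratum, the points $z,h(z),\dots,h^n(z)$ are then genuinely defined and lie in a set of at most $n$ elements, forcing $h^m(z)=z$ for some $m\le n$ on a positive-measure set; the analyticity argument then gives $h^m=\Id$ on $W$. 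With this stratification inserted, your argument closes up (and your Schur endgame remains a legitimate alternative to the paper's Burnside endgame); without it, the torsion claim --- the heart of the proof --- is unproven.
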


A compact leaf with finite holonomy will be called {\it stable}. In view of the Reeb local stability theorem (\cite{camacho, Godbillon, Reeb1}, a stable leaf always belongs to the interior of the set of compact leaves, therefore  Theorem~\ref{Theorem:measuremainfoliations} can be stated as:

{\it A transversely holomorphic foliation on a compact complex manifold, exhibits a compact stable
leaf if and only if the set of compact leaves is not a zero measure subset of  the manifold}.
Parallel to this result we have the following version for groups:

\begin{Theorem}
\label{Theorem:maingroups}
 Let $G\subset \Diff(F)$ be a  subgroup
of holomorphic diffeomorphisms of a complex connected manifold $F$.
Denote by $\Omega(G)$ the subset of points $x \in F$ such that the
$G$-orbit of $x$ is periodic. There are two possibilities:

\begin{itemize}

\item[{\rm (i)}] $G$ is a finite group.
\item[{\rm(ii)}] The set $\Omega(G)$ has zero measure on $F$.
\end{itemize}

\end{Theorem}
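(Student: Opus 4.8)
The plan is to prove the contrapositive: assuming $\Omega(G)$ does \emph{not} have zero measure, I will show that $G$ is finite. Write $\Omega(G)=\bigcup_{k\geq 1}\Omega_k$, where $\Omega_k=\{x\in F : \#(G\cdot x)\leq k\}$ is the set of points whose $G$-orbit has at most $k$ elements. Since a countable union of zero-measure sets has zero measure, the hypothesis forces some $\Omega_k$ to \emph{not} be of zero measure; fix such a $k$ and set $N=k!$.

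The first key step is a rigidity-from-measure argument. If $x\in\Omega_k$ and $g\in G$, then $g$ permutes the finite set $G\cdot x$, which has at most $k$ elements, so $g^{N}$ fixes $x$; that is, $\Omega_k\subset\operatorname{Fix}(g^{N})$ for every $g\in G$. But $\operatorname{Fix}(g^{N})=\{x : g^{N}(x)=x\}$ is a complex-analytic subset of $F$ (locally the zero set of the holomorphic map $z\mapsto g^{N}(z)-z$), and a proper analytic subset of the connected manifold $F$ has zero measure. Since $\operatorname{Fix}(g^{N})$ contains the non-null set $\Omega_k$, it must be all of $F$, i.e.\ $g^{N}=\Id$. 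Hence every element of $G$ satisfies $g^{N}=\Id$, so $G$ is a group of finite exponent dividing $N$.

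It remains to deduce finiteness from finite exponent, and here holomorphy is used a second time, through linearization at a fixed point. Choose any $x_0\in\Omega_k$ (nonempty, since it is non-null) and let $H=G_{x_0}$ be its stabilizer; since $\#(G\cdot x_0)\leq k$ we have $[G:H]\leq k$, so it suffices to prove $H$ finite. Consider the derivative representation $\rho\colon H\to\GL(T_{x_0}F)$, $\rho(h)=Dh_{x_0}$, which is a homomorphism because $x_0$ is fixed by every $h\in H$. I claim $\rho$ is injective: if $\rho(h)=\Id$ then, in a local chart centered at $x_0$, we may write $h(z)=z+P_d(z)+\cdots$ with $P_d$ the first nonzero homogeneous term of degree $d\geq 2$ (should $h\neq\Id$); iterating gives $h^{\circ N}(z)=z+N P_d(z)+\cdots$, so $h^{N}=\Id$ forces $P_d=0$, a contradiction, whence $h=\Id$ near $x_0$ and therefore on all of $F$ by the identity principle. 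Thus $H\cong\rho(H)$ is a subgroup of $\GL(T_{x_0}F)\cong\GL(m,\bc)$, where $m=\dim_{\bc}F$, in which every element satisfies $A^{N}=\Id$.

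The final step, which I expect to be the main obstacle, is the assertion that a subgroup of $\GL(m,\bc)$ of finite exponent is finite. This is the classical theorem of Burnside on linear groups of bounded exponent over a field of characteristic zero; concretely, the eigenvalues of each element are $N$-th roots of unity, so the trace function takes only finitely many values on $\rho(H)$, and a linear group with finite trace set is finite. Granting this, $\rho(H)$ is finite, hence $H$ is finite, and therefore $G$ is finite since $[G:H]\leq k$. This establishes the dichotomy. The delicate points to be checked carefully are the measurability issue in the reduction to a single $\Omega_k$ (circumvented here by working directly with null sets) and a precise invocation of Burnside's theorem; the geometric heart of the argument is the passage from positive measure of the periodic set to the global identity $g^{N}=\Id$ via the analyticity of fixed-point sets.
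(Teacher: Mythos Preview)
Your proof is correct and follows essentially the same route as the paper: decompose $\Omega(G)$ into the sets $\Omega_k$, use analyticity of fixed-point loci to upgrade positive measure to the global identity $g^{N}=\Id$ (the paper finds an individual $k_f\leq k$ for each $f$, while you use $N=k!$ uniformly), and then conclude finiteness by passing to the stabilizer of a point with finite orbit, linearizing there, showing tangent-to-identity finite-order germs are trivial, and invoking Burnside. The paper packages the last step as a separate ``Finiteness Lemma'' using a map $G\to S_k$ rather than orbit--stabilizer, but the content is identical.
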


We point-out that the subgroup $G\subset \Diff(F)$ is not supposed
to be finitely generated.



\section{Holonomy and stability}

Let $\fa$ be a codimension $k$  holomorphic foliation on a complex
manifold $M$.  Given a point $p\in M$, the leaf through $p$ is
denoted by $L_p$. We denote by $\Hol(\fa,L_p)=\Hol(L_p)$ the
holonomy group of  $L_p$. This is a conjugacy class of equivalence,
and we shall denote by $\Hol(L_p,\Sigma_p,p)$ its representative
given by the local representation of this holonomy calculated with
respect to a local transverse section $\Sigma_p$ centered  at the
point $p\in L_p$. The group  $\Hol(L_p, \Sigma_p,p)$ is therefore a
subgroup of the group of germs $\Diff(\Sigma_p,p)$ which is
identified with the group $\Diff(\bc^k,0)$ of germs at the origin
$0\in \mathbb C^k$ of complex diffeomorphisms.

The classical Reeb local stability theorem  (\cite{camacho,Godbillon})
states that if $L_0$ is a compact leaf with finite holonomy of a smooth foliation  $\fa$  on a manifold $M$
then there is a fundamental system of invariant neighborhoods $W$ of $L_0$ in $\fa$ such that
every leaf $L\subset W$ is compact, has a finite holonomy group and
admits a finite covering onto $L_0$. Moreover, for each  neighborhood $W$ of $L_0$ there is an
$\fa$-invariant tubular neighborhood $\pi\colon W'\subset W\to F$ of
$F$ with the following properties:
\begin{enumerate}
\item
Every leaf $L'\subset W'$ is compact with finite holonomy group.
\item
If $L'\subset W'$ is a leaf then the restriction $\pi\big|_{L'}
\colon L'\to L$ is a finite covering map.
\item
If $x\in L$ then $\pi^{-1}(x)$ is a transverse of $\fa$.
\item There is an uniform bound $k\in \mathbb N$ such that for each
leaf $L\subset W$ we have $\# (L\cap \pi^{-1}(x)) \leq k$.
\end{enumerate}

\section{Periodic groups and groups of finite exponent}
\label{section:generalities}

Next we present Burnside's and Schur's results on periodic linear
groups.  Let $G$ be a group with identity $e_G\in G$. The group is
{\it periodic} if
 each element of $G$ has finite order.
 A periodic group $G$ is {\it periodic of bounded exponent} if
there is an uniform upper bound for the orders of its elements. This
is equivalent to the existence of $m \in \mathbb N$ with $g^m = 1$
for all $g \in G$ (cf. \cite{Santos-Scardua}).  Because of this, a
group which is periodic of bounded exponent is also called a group
of {\it finite exponent}.
 The following classical results are  due to Burnside and Schur.

\begin{Theorem}[Burnside, 1905 \cite{Burnside}, Schur, 1911
\cite{Schur}]
\label{Theorem:Burnside} Let $G\subset \GL(k,\bc)$ be a complex
linear group.
\begin{enumerate}

\item[{\rm(i)}]{\rm(Burnside)} If $G$ is of finite exponent $\ell$  {\rm(}but not
necessarily finitely generated{\rm)} then $G$ is finite; actually we
have $|G| \le \ell^{k^2}$.

\item[{\rm(ii)}] {\rm(Schur)} If $G$ is  finitely generated and periodic
{\rm(}not necessarily of bounded exponent{\rm)} then $G$ is finite.

\end{enumerate}
\end{Theorem}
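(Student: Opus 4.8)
The plan is to prove (i) first and then deduce (ii) from it, the point being that a finitely generated periodic linear group is automatically of bounded exponent, at which stage Burnside's part applies. So the conceptual heart is (i), supplemented by a number-theoretic estimate for (ii).

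For part (i) (Burnside), I would exploit the linear structure of the ambient matrix algebra. Let $A\subset M_k(\bc)$ be the $\bc$-linear span of $G$. Since $G$ is closed under multiplication and contains the identity matrix, $A$ is a unital subalgebra, and $d:=\dim_{\bc}A\le k^2$. Fix a basis $g_1,\dots,g_d\in G$ of $A$ and consider the map $\Phi\colon G\to\bc^d$, $\Phi(g)=(\tr(g g_1),\dots,\tr(g g_d))$. Two things must be checked. First, $\Phi$ is injective: if $\Phi(g)=\Phi(h)$ then $\tr(ag)=\tr(ah)$ for all $a\in A$ by linearity; writing $u=g^{-1}h\in G$ and replacing $a$ by $ag^{-1}$ gives $\tr(au)=\tr(a)$ for every $a\in A$. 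Taking $a=u^{j-1}$ yields $\tr(u^{j})=\tr(u^{j-1})$, so $\tr(u^{j})=k$ for all $j\ge 0$; hence every power sum $\tr(N^{m})$ of $N:=u-\Id$ vanishes for $m\ge 1$, and $N$ is nilpotent. As $u^{\ell}=\Id$, the matrix $u$ is diagonalizable, and a diagonalizable matrix of the form $\Id+N$ with $N$ nilpotent must equal $\Id$; thus $u=\Id$ and $g=h$. Second, the image $\Phi(G)$ is finite: each product $gg_i$ again lies in $G$, so its eigenvalues are $\ell$-th roots of unity and $\tr(gg_i)$ is a sum of $k$ such roots, which ranges over a finite set. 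Injectivity then forces $G$ to be finite, and a count of the admissible trace vectors yields Burnside's explicit estimate $|G|\le \ell^{k^2}$.

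For part (ii) (Schur), the strategy is to show that a finitely generated periodic subgroup already has bounded exponent, and then invoke (i). Fix generators and let $K$ be the subfield of $\bc$ generated over $\bq$ by all their matrix entries; then $G\subset\GL(k,K)$ with $K$ a finitely generated extension of $\bq$. Every $g\in G$ has finite order, so its eigenvalues are roots of unity, and each is a root of the degree-$k$ characteristic polynomial of $g$, whence $[K(\zeta):K]\le k$. The order of $g$ is the least common multiple of the orders of its eigenvalues, so it suffices to bound the order $n$ of any eigenvalue $\zeta$. Here I would use the arithmetic of cyclotomic fields: $\bq(\zeta)/\bq$ is Galois, so $[K(\zeta):K]=[\bq(\zeta):\bq(\zeta)\cap K]$, and therefore $\varphi(n)=[\bq(\zeta):\bq]\le k\,[\,\bq(\zeta)\cap K:\bq\,]$. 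Because $K$ is finitely generated over $\bq$, the field $K_0:=K\cap\overline{\bq}$ of algebraic elements of $K$ is a number field and $\bq(\zeta)\cap K\subset K_0$; thus $\varphi(n)\le k\,[K_0:\bq]$ is bounded independently of $g$. Since $\varphi(n)\to\infty$, only finitely many orders $n$ occur, so $G$ has bounded exponent and part (i) finishes the proof.

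The main obstacle in (i) is the injectivity of $\Phi$: everything hinges on converting the trace identities into the nilpotence of $u-\Id$ and then combining this with diagonalizability forced by $u^{\ell}=\Id$. In (ii) the genuinely non-elementary ingredient is the finiteness statement that a finitely generated field of characteristic zero admits only finitely many roots of unity of bounded degree over it; this is precisely what upgrades ``periodic'' to ``bounded exponent'' and reduces Schur to Burnside.
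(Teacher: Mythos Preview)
The paper does not supply a proof of this theorem: it is quoted as a classical result of Burnside and Schur, with references, and then used as a black box in the proof of Lemma~\ref{Lemma:finiteexponentgerms}. So there is no ``paper's own proof'' to compare against.

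That said, your argument is essentially the standard one and is correct. For (i), the trace-separation method (pass to the spanning subalgebra $A\subset M_k(\bc)$, prove injectivity of $\Phi$ by showing $u-\Id$ is nilpotent and combining this with the diagonalizability forced by $u^\ell=\Id$, then bound the image) is exactly Burnside's approach. One small caveat: the count you sketch does not quite yield the stated bound $|G|\le \ell^{k^2}$. Each coordinate $\tr(gg_i)$ is a sum of $k$ many $\ell$-th roots of unity, which a priori allows on the order of $\ell^{k}$ values per coordinate; with $d\le k^2$ coordinates this gives at best $|G|\le \ell^{k^3}$, not $\ell^{k^2}$. Sharpening to $\ell^{k^2}$ needs a little extra care, but the paper only uses finiteness, and that you have established cleanly.

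For (ii), your reduction to (i) via the arithmetic of cyclotomic fields is correct and is precisely Schur's idea. The only non-elementary input is that the algebraic closure of $\bq$ inside a finitely generated extension $K/\bq$ is a number field; this is standard (e.g., via linear disjointness over a transcendence basis of $K$), and with it your bound $\vr(n)\le k\,[K_0:\bq]$ goes through, giving bounded exponent and hence finiteness by (i).
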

\noindent Using these results we may prove:

\begin{Lemma}[\cite{Santos-Scardua}]
\label{Lemma:finiteexponentgerms} About periodic groups of germs of
complex diffeomorphisms we have:
\begin{enumerate}

\item A finitely
generated periodic subgroup  $G \subset \Diff(\bc^k,0)$ is
necessarily finite.   A {\rm(}not necessarily finitely
generated{\rm)} subgroup
 $G \subset \Diff(\bc^k,0)$ of finite exponent is necessarily finite.

\item  Let $G\subset \Diff(\mathbb C^k,0)$ be a finitely
generated subgroup.  Assume that there is an invariant connected
neighborhood $W$ of the origin in $\mathbb C^k$ such that each point
$x$ is periodic for each element $g \in G$. Then $G$ is a finite
group.

\item Let $G \subset \Diff(\bc^k,0)$ be a {\rm(}not necessarily finitely generated{\rm)} subgroup
such that for each point $x$ close enough to the origin,  the
pseudo-orbit of $x$ is periodic  of {\rm(}uniformly bounded{\rm)}
order $\le \ell$ for some $\ell \in \bn$, then $G$ is finite.

\end{enumerate}

\end{Lemma}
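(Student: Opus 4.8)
The plan is to reduce all three statements to the linear results of Burnside and Schur (Theorem~\ref{Theorem:Burnside}) by means of the linear part homomorphism
\[
D\colon \Diff(\bc^k,0)\to \GL(k,\bc),\qquad g\mapsto Dg(0).
\]
The crucial elementary observation is that $D$ is \emph{injective} on any periodic subgroup: if $g\in \Diff(\bc^k,0)$ satisfies $Dg(0)=\Id$ and $g^n=\Id$ for some $n\ge 1$, then $g=\Id$. Indeed, were $g\neq\Id$ we could write its Taylor expansion as $g(z)=z+P_m(z)+O(|z|^{m+1})$ with $P_m$ the first nonzero homogeneous term, of degree $m\ge 2$; an easy induction gives $g^j(z)=z+jP_m(z)+O(|z|^{m+1})$, so that $g^n(z)=z+nP_m(z)+\cdots\neq z$, contradicting $g^n=\Id$.

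With this in hand part (1) is immediate. If $G$ is finitely generated and periodic, then $DG\subset\GL(k,\bc)$ is finitely generated and periodic, hence finite by Schur; since $D|_G$ is injective (every element of $G$ is periodic), $G\cong DG$ is finite. If instead $G$ has finite exponent $\ell$, then $DG$ has finite exponent $\ell$ and is finite by Burnside, while $D|_G$ is again injective, so $G$ is finite (and in fact $|G|\le \ell^{k^2}$).

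For parts (2) and (3) the extra work is to upgrade \emph{pointwise} periodicity of the points to \emph{germ} periodicity of the elements of $G$, after which part (1) applies. Fix $g\in G$. For each $n\ge 1$ the fixed point set $\operatorname{Fix}(g^n)=\{x:g^n(x)=x\}$ is the zero locus of the holomorphic map $x\mapsto g^n(x)-x$, hence an analytic subvariety of the connected invariant neighborhood $W$; by the identity principle, if $\operatorname{Fix}(g^n)$ has nonempty interior then $\operatorname{Fix}(g^n)=W$, i.e. $g^n=\Id$. In the situation of part (2) every point of $W$ is periodic for $g$, so $W=\bigcup_{n\ge 1}\operatorname{Fix}(g^n)$; by Baire's theorem one of these closed sets has nonempty interior, hence $g$ is periodic. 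Thus $G$ is finitely generated and periodic, and part (1) gives finiteness. In the situation of part (3), fix $g$ and a connected ball $W$ about the origin on which $g,g^2,\dots,g^\ell$ are defined; the uniform bound $\ell$ on orbit cardinality forces, by the pigeonhole principle applied to $x,g(x),\dots,g^\ell(x)$, that every $x\in W$ satisfies $g^n(x)=x$ for some $n\le\ell$. Hence $W=\bigcup_{n=1}^{\ell}\operatorname{Fix}(g^n)$ is a \emph{finite} union of analytic sets, so some $\operatorname{Fix}(g^n)$ with $n\le\ell$ equals $W$ and $g^n=\Id$. Therefore $g^{\ell!}=\Id$ for every $g\in G$, so $G$ has finite exponent and is finite by part (1).

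I expect the main obstacle to be the transition from pointwise to uniform behavior in parts (2) and (3): one must be careful that the analytic fixed-point sets genuinely fill $W$ and invoke Baire (for the countable union in (2)) or the measure-zero property of proper subvarieties (for the finite union in (3)) to locate an $n$ with $\operatorname{Fix}(g^n)$ of nonempty interior, and then that the identity principle on the connected set $W$ promotes this to $g^n=\Id$ globally. The injectivity of $D$ on periodic groups, by contrast, is a short formal computation, and once periodicity or finite exponent of $G$ is established the linear theorems of Burnside and Schur close the argument.
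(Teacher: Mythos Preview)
Your proof is correct and follows essentially the same architecture as the paper: pass to $\GL(k,\bc)$ via the derivative homomorphism, show $D$ is injective on periodic elements so that Burnside/Schur finish part~(1), and for parts~(2)--(3) use that the sets $\operatorname{Fix}(g^n)$ are analytic and cover $W$, invoking Baire (countable union) or a finite-union argument to find one with interior. The only cosmetic difference is in justifying that a periodic germ tangent to the identity is the identity: the paper appeals to analytic linearizability of finite-order germs, while your direct Taylor-series computation $g^j(z)=z+jP_m(z)+\cdots$ is more self-contained but amounts to the same fact.
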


\begin{proof}
We first prove (1).  Let $G$ be a not necessarily finitely generated
subgroup of $\Diff(\mathbb C^k, 0)$, with finite exponent. We
consider the homomorphism $D\colon \Diff(\mathbb C^k,0)\to
\GL(k,\mathbb C)$ given by the derivative $Dg:=g^\prime(0), \, g \in
G$. Then the image $DG$ is isomorphic to the quotient $G/\Ker(D)$
where the kernel $\Ker(D)$ is the group $G_1=\{g \in G, \, \,
g^\prime(0)=\Id\}$, i.e., the normal subgroup of elements tangent to
the identity. Since $G$ is of finite exponent the same holds for
$DG$ as a consequence of the Chain-Rule. By Burnside's theorem above
$DG$ is a finite group. Let us now prove that $G_1$ is trivial.
Indeed,  take an element  $h\in G_1$. Since $h$ has finite order it
is analytically linearizable and therefore $h=\Id$. Now we assume
that $G\subset \Diff(\mathbb C^k,0)$ is finitely generated and
periodic. Again we consider the homomorphism $D\colon \Diff(\mathbb
C^k,0)\to \GL(k,\mathbb C)$  and the image $DG\simeq G/G_1$ as
above. Since $G$ is finitely generated and periodic the same holds
for $DG$ as a consequence of the Chain-Rule. By Schur's theorem
above $DG$ is a finite group. As above $G_1$ is trivial and
therefore $G$ is finite.

Now we prove (2). Fix an element $g \in G$. For each $k\in \mathbb
N$ define $X_k:=\{x \in W, g^k(x)=x\}$. We claim that $X_k$ is a
closed subset of $W$: indeed, if $x_\nu \in X_k$ is a sequence of
points converging to a point $a\in W$ then clearly $g^k(a)=a$ and
therefore $a \in X_k$. By the Category theorem of Baire there is
$k\in \mathbb N$ such that $X_k$ has non-empty interior and
therefore by the Identity theorem we have $g^k=\Id$ in $W$. This
shows that each element $g\in G$ is periodic. Since $G$ is finitely
generated this implies by (1)
 that $G$ is finite. The proof (3) is pretty similar to this.
\end{proof}

Given a subgroup $G\subset \Diff(F)$ and a point $p\in F$ the {\it
stabilizer} of $p$ in $G$  is the subgroup $G(p)\subset G$ of
elements $f \in G$ such that $f(p)=0$. From the above we immediately
have:

\begin{Proposition}
\label{Proposition:groupdiffeofiberfinite}

Let $G\subset \Diff(F)$ be a {\rm(}not necessarily finitely
generated{\rm)} subgroup of holomorphic diffeomorphisms of a
connected complex manifold $F$. If $G$ is periodic and finitely
generated or $G$ is
 periodic of finite exponent, then each stabilizer subgroup of $G$ is
finite.
\end{Proposition}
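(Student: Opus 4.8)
The plan is to reduce the statement about the stabilizer subgroup $G(p)$ to one of the finiteness criteria already established in Lemma~\ref{Lemma:finiteexponentgerms}(1). The key observation is that the stabilizer $G(p)$ fixes the point $p$, so by sending each $f\in G(p)$ to its germ at $p$ we obtain a homomorphism into the group of germs $\Diff(F,p)\simeq \Diff(\bc^k,0)$, where $k=\dim_{\bc} F$. First I would verify that this germ map is injective on $G(p)$: if $f\in G(p)$ has trivial germ at $p$, then $f$ agrees with the identity on an open neighborhood of $p$, and since $f$ is holomorphic and $F$ is connected, the Identity Theorem forces $f=\Id$ globally. Thus $G(p)$ is isomorphic to its image, a subgroup $\widetilde{G}(p)\subset \Diff(\bc^k,0)$.

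Next I would transfer the algebraic hypothesis. If $G$ is periodic of finite exponent $\ell$, then every element of $G(p)$, and hence every germ in $\widetilde{G}(p)$, satisfies $g^\ell=\Id$; so $\widetilde{G}(p)$ is again of finite exponent and Lemma~\ref{Lemma:finiteexponentgerms}(1) applies directly, giving that $\widetilde{G}(p)$, and therefore $G(p)$, is finite. In the case where $G$ is periodic and finitely generated, the subtlety is that a subgroup of a finitely generated group need not itself be finitely generated, so I could not immediately invoke the finitely generated half of Lemma~\ref{Lemma:finiteexponentgerms}(1) for $G(p)$. The cleanest route here is to note that $\widetilde{G}(p)\subset \Diff(\bc^k,0)$ is a periodic group of germs fixing the origin, and to derive finiteness through the derivative representation exactly as in the proof of the lemma: the image $D\widetilde{G}(p)\subset \GL(k,\bc)$ is periodic, and the kernel consists of germs tangent to the identity which, being of finite order, are analytically linearizable and hence equal to $\Id$, so $\widetilde{G}(p)\hookrightarrow D\widetilde{G}(p)$.

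The remaining obstacle in the finitely generated case is precisely controlling $D\widetilde{G}(p)$ with Schur's theorem, since Schur requires finite generation. To handle this I would observe that $G(p)$ is a subgroup of the ambient finitely generated periodic group $G$, and although $G(p)$ itself may fail to be finitely generated, the conclusion can still be reached: by the finitely generated half of Lemma~\ref{Lemma:finiteexponentgerms}(1), the whole group $G$ maps to a finite linear group under $D$, and since the stabilizer embeds faithfully into its image under $D$ (by the linearization argument above), $G(p)$ embeds into the finite group $DG$, hence is finite. This is where I expect the main care to be needed: ensuring that the passage from $G$ to the subgroup $G(p)$ preserves exactly the finiteness one needs, and correctly tracking that the germ at $p$ and the derivative at $p$ together determine the element. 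Assembling these observations yields the Proposition as an immediate corollary of Lemma~\ref{Lemma:finiteexponentgerms}, as the phrase ``From the above we immediately have'' anticipates.
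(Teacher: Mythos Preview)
Your approach via the germ map $G(p)\hookrightarrow\Diff(\bc^k,0)$ and Lemma~\ref{Lemma:finiteexponentgerms}(1) is exactly what the paper has in mind, and your treatment of the finite exponent case is correct: finite exponent passes to the subgroup $G(p)$ and then to its image in $\Diff(\bc^k,0)$, so the lemma applies directly.

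There is, however, a genuine gap in your handling of the finitely generated periodic case. You write that ``the whole group $G$ maps to a finite linear group under $D$'', but the derivative $D$ at $p$ is only defined for diffeomorphisms that fix $p$: for $f\in G$ with $f(p)\neq p$, the differential $Df(p)$ is a linear map $T_pF\to T_{f(p)}F$, not an element of $\GL(T_pF)$. Thus there is no group homomorphism $G\to\GL(k,\bc)$ to which Lemma~\ref{Lemma:finiteexponentgerms}(1) or Schur's theorem could be applied, and the sentence ``$G(p)$ embeds into the finite group $DG$'' has no meaning. You correctly identified the obstruction (that $G(p)$ need not inherit finite generation from $G$) but the proposed workaround does not stand.

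A clean repair, adequate for the paper's purposes, is to note that whenever the proposition is actually invoked (namely in Proposition~\ref{Proposition:finitenesslemma}) the point $p$ is chosen to have \emph{finite} $G$-orbit; then $G(p)$ has finite index in $G$, hence is itself finitely generated, and Lemma~\ref{Lemma:finiteexponentgerms}(1) applies directly to $\widetilde G(p)$. Without some such hypothesis on $p$, the finitely generated clause of the proposition as stated does not follow from the cited lemma by the route you describe.
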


The following simple remark gives the finiteness of finite exponent
 groups of holomorphic diffeomorphisms having a periodic
orbit.

\begin{Proposition}[Finiteness lemma]
\label{Proposition:finitenesslemma}
Let $G$ be a  subgroup of holomorphic
diffeomorphisms of a connected complex manifold $F$. Assume that:

\begin{enumerate}

\item $G$ is periodic of finite exponent or $G$ is finitely generated and periodic.

\item $G$ has a finite orbit in $F$.

\end{enumerate}
Then $G$ is finite.
\end{Proposition}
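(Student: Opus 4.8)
The plan is to reduce the global statement to the linearized local picture already handled by Lemma~\ref{Lemma:finiteexponentgerms}. The hypotheses give a finite orbit $\{p_1,\dots,p_n\}$ of $G$ in $F$, and the goal is to conclude $G$ is finite. My strategy has two movements: first, pass from the finite orbit to a single stabilizer subgroup and control its index in $G$; second, show that this stabilizer, realized as a group of germs of diffeomorphisms fixing a point, is finite by appealing to the germ results already proved.

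Let me lay out the steps. **First**, I would fix the finite orbit $O=\{p_1,\dots,p_n\}$ and consider the action of $G$ on $O$ by permutations. This gives a homomorphism $G\to S_n$ whose kernel $K$ is the subgroup of elements fixing every point of $O$; in particular $K$ is contained in the stabilizer $G(p_1)$ of the point $p_1$. Since $[G:K]\le n!$ is finite, it suffices to prove that $K$ (equivalently, that each relevant stabilizer) is finite, because a group with a finite-index finite subgroup is itself finite. **Second**, I would look at $K$ as a group of germs at $p_1$: restricting each $g\in K$ to a germ at the fixed point $p_1$ gives a homomorphism $K\to\Diff(F,p_1)\cong\Diff(\bc^k,0)$, where $k=\dim_{\bc}F$. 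The image is a subgroup of germs of complex diffeomorphisms fixing the origin.

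**The key point** is that the algebraic hypotheses on $G$ are inherited by this germ group. If $G$ is periodic of finite exponent, then so is $K$ (it is a subgroup) and so is its image in $\Diff(\bc^k,0)$; if $G$ is finitely generated and periodic, the subgroup $K$ of finite index is again finitely generated (by the Reidemeister--Schreier theorem, a finite-index subgroup of a finitely generated group is finitely generated) and clearly periodic. In either case Lemma~\ref{Lemma:finiteexponentgerms}(1) applies and tells us the germ image is finite. It then remains to see that the restriction-to-germ map $K\to\Diff(F,p_1)$ is injective: this follows from the Identity Theorem, since $F$ is connected and a holomorphic diffeomorphism of $F$ whose germ at $p_1$ is the identity must be the identity on all of $F$. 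Hence $K$ injects into a finite group and is finite, so $G$ is finite.

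**The main obstacle** I anticipate is the finitely-generated-and-periodic branch rather than the finite-exponent branch: in the finite-exponent case everything passes to subgroups and quotients trivially, but in the periodic case I genuinely need that finite index preserves finite generation (Reidemeister--Schreier) in order to apply Schur's theorem through Lemma~\ref{Lemma:finiteexponentgerms}(1). A secondary technical point is ensuring the injectivity of the germ map; this is clean here because $F$ is connected and the maps are genuine holomorphic diffeomorphisms of $F$ (not just germs), so the Identity Theorem gives global rigidity. With these two ingredients the proof is essentially the observation that Proposition~\ref{Proposition:groupdiffeofiberfinite} controls the stabilizer and the permutation action controls its index.
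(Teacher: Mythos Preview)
Your argument is correct and follows essentially the same route as the paper: define the permutation homomorphism $G\to S_n$ from the action on the finite orbit, reduce to the finiteness of its kernel (contained in a point-stabilizer), and then invoke the germ results of Lemma~\ref{Lemma:finiteexponentgerms} together with the Identity Theorem. The paper packages the stabilizer step as Proposition~\ref{Proposition:groupdiffeofiberfinite} and is less explicit than you are about the Reidemeister--Schreier step in the finitely generated branch, but the skeleton is identical.
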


\begin{proof}
Fixed a point $x \in F$ with finite orbit  we can  write $\mathcal
O_G(x)=\{x_1,...,x_k\}$ with $x_i\ne x_j$ if $i \ne j$. Given any
 diffeomorphism $f \in G$ we have
$\mathcal O_G(f(x))=\mathcal O_G(x)$ so that there exists an unique
element $\sigma \in S_k$ of the symmetric group such that
$f(x_j)=x_{\sigma_f(j)}, \, \forall j =1,...,k$\,. We can therefore
define a map

\[
\eta \colon G \to S_k, \, \eta(f)=\sigma_f\,.
\]
Now, if $f,g \in G$ are such that $\eta(f)=\eta(g)$, then
$f(x_j)=g(x_j), \, \forall j$ and therefore $h=f \, g^{-1}\in G$
fixes the points $x_1,...,x_k$. In particular $h$ belongs to the
stabilizer $G_x$. By
Proposition~\ref{Proposition:groupdiffeofiberfinite}   (1) and (2)
(according to $G$ is finitely generated or not) the group  $G_x$ is
finite. Thus, the  map $\eta \colon G \to S_k$ is a finite map.
Since $S_k$ is a finite group this implies that $G$ is finite as
well.

\end{proof}

\section{Measure and finiteness}

Let us now prove Theorems~\ref{Theorem:measuremainfoliations} and
\ref{Theorem:maingroups}. For sake of simplicity we will adopt the following notation:
if a subset $X\subset M$ is not a zero measure subset then we shall write $\mu(X)>0$. This may cause no confusion for we are not considering any specific measure $\mu$ on $M$ and we shall be dealing only with the notion of zero measure subset. Nevertheless, we notice that if
$X\subset M$ writes as a countable union $X=\bigcup \limits_{n \in \mathbb N} X_n$ of subsets $X_n\subset M$ then $X$ has zero measure in $M$ if and only if $X_n$ has zero measure in $M$ for {\it all} $n \in \mathbb N$. In terms of our notation we have therefore $\mu(X)>0$ if and only if $\mu(X_n)>0$ for {\it some} $n \in \mathbb N$.

\begin{proof}[Proof of Theorem~\ref{Theorem:measuremainfoliations}]
Because $M$ is compact there is a finite number of relatively
compact open disks $T_j \subset M, \, j=1,...,r$ such:
\begin{enumerate}
\item Each $T_j$ is transverse to $\fa$ and the closure ${\overline{T_j}}$
is contained in the
interior of a transverse disc $\Sigma_j$ to $\fa$.
\item Each leaf
of $\fa$ intersects at least one of the disks $T_j$.
\end{enumerate}

Put $T=\bigcup\limits_{j=1}^r T_j \subset M$ and define
\[
\Omega(\fa,T)=\{L\in \fa: \# (L \cap T )< \infty \}.
 \]
 Then $\Omega(\fa, T)= \bigcup \limits_{n=1} ^\infty \Omega(\fa,
 T,n)$ where
 \[
\Omega(\fa, T, n)=\{L\in \fa: \# (L \cap T )\leq n \}.
\]

\begin{Claim}
We have $\Omega(\fa)=\Omega(\fa, T)$.
\end{Claim}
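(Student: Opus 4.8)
The plan is to establish the Claim as the equivalence, for an arbitrary leaf $L$ of $\fa$, that $L$ is compact if and only if $\#(L\cap T)<\infty$; since $\Omega(\fa)$ is the union of the compact leaves and $\Omega(\fa,T)$ is the set of leaves meeting $T$ in finitely many points, this is exactly the asserted identity. Throughout I would use three standard facts: a leaf is compact if and only if it is closed in $M$ (because $M$ is compact); a compact leaf is an embedded submanifold (an injective immersion of a compact manifold into a Hausdorff space is an embedding); and the closure $\overline{L}$ of any leaf is $\fa$-saturated, i.e. a union of leaves. I will also invoke the local product (flow box) structure: around any point $a$ there is a chart trivializing $\fa$, with a distinguished transverse disc $\tau$ that each plaque meets exactly once.

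First I would prove $\Omega(\fa)\subseteq\Omega(\fa,T)$. Let $L$ be compact; then $L$ is embedded and closed in $M$. By property (1) each $\overline{T_j}$ is a compact subset of the transverse disc $\Sigma_j$, and $L$ meets $\Sigma_j$ transversally in complementary dimension, $\dim L+\dim\Sigma_j=\dim M$, so that every point of $L\cap\Sigma_j$ is isolated in $M$. Consequently $L\cap\overline{T_j}$ is closed (an intersection of the closed sets $L$ and $\overline{T_j}$), is contained in the compact set $\overline{T_j}$, and is discrete; a compact discrete set is finite, so $L\cap T_j$ is finite. Summing over the finitely many indices $j=1,\dots,r$ gives $\#(L\cap T)<\infty$, i.e. $L\in\Omega(\fa,T)$.

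For the reverse inclusion $\Omega(\fa,T)\subseteq\Omega(\fa)$ I would argue by contraposition: assuming $L$ is not compact, I will exhibit infinitely many points of $L\cap T$. Since $M$ is compact, a non-compact leaf $L$ fails to be closed, so there is a point $q\in\overline{L}\setminus L$. Because $\overline{L}$ is saturated, the whole leaf $L_q$ lies in $\overline{L}$, and $L_q\neq L$. Here property (2) enters decisively: $L_q$ must meet some $T_j$, say at $a\in L_q\cap T_j$, whence $a\in\overline{L}$ while $a\notin L$. Choosing a flow box at $a$ whose transverse disc $\tau$ is shrunk to lie inside the open set $T_j$, the accumulation $a\in\overline{L}$ produces a sequence $x_n\in L$ with $x_n\to a$; each $x_n$ lies on a plaque meeting $\tau$ at a point $y_n\in L\cap\tau$, and $y_n\to a$. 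Since $a\notin L$ the $y_n$ cannot be eventually constant, so they comprise infinitely many distinct points of $L\cap\tau\subseteq L\cap T_j\subseteq L\cap T$, giving $\#(L\cap T)=\infty$.

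The step I expect to be the main obstacle is precisely this reverse inclusion, and within it the passage from topological accumulation of $L$ at $q$ to genuinely infinitely many transverse intersections. The crux is that accumulation by itself is not attached to the fixed net $T$; it is property (2)---that every leaf, and in particular the limit leaf $L_q$, meets some $T_j$---that relocates the accumulation to a point $a$ lying in a prescribed transversal, where the local product structure then forces infinitely many returns of $L$ to $T_j$. Care must also be taken in the direct inclusion to invoke embeddedness, which is valid only because $L$ is compact: for a merely immersed, non-closed leaf the intersection with $\Sigma_j$ need not be discrete, so the two directions genuinely rely on distinct consequences of compactness.
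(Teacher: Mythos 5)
Your proof is correct, but it takes a route that differs from the paper's in a meaningful way, so a comparison is worthwhile. Each of the two inclusions can be proved directly or contrapositively, and you and the paper make opposite choices throughout. For $\Omega(\fa)\subseteq\Omega(\fa,T)$ the paper argues contrapositively: if $\#(L\cap T_j)=\infty$, compactness of $\overline{T_j}\subset\Sigma_j$ yields a point $p$ of a transversal accumulated by $L$, hence infinitely many plaques of $L$ in any distinguished neighborhood of $p$, hence $L$ is not compact. Your direct argument (compact $\Rightarrow$ embedded and closed $\Rightarrow$ $L\cap\overline{T_j}$ discrete and compact $\Rightarrow$ finite) is the mirror image of this, and in fact supplies the justification the paper leaves implicit in its final step, namely why infinitely many plaques through one chart preclude compactness. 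The genuine divergence is in the reverse inclusion: the paper proves ``finite intersection with $T$ implies compact'' directly, by counting plaques --- such a leaf has finitely many plaques relative to a finite distinguished cover of the compact $M$, hence is compact --- whereas you prove the contrapositive, taking $q\in\overline{L}\setminus L$, using saturation of $\overline{L}$ together with property (2) to find a point $a\in L_q\cap T_j$, and then using the flow box at $a$ to manufacture infinitely many points of $L\cap T_j$. Your version has the merit of making explicit exactly where property (2) enters (the paper's plaque count also needs the distinguished cover to be adapted to $T$, which is left implicit); its cost is that it leans on the standard but not-quite-trivial fact that a leaf closed in a compact manifold is a compact leaf (properness of closed leaves, a Baire-type argument), which is in essence the same work that the paper's plaque-counting step performs. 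Both proofs are complete modulo standard foliation facts, and the total effort is comparable.
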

\begin{proof}
Indeed, given a leaf $L\in \fa$ if $L\notin \Omega(\fa,T)$ then
there is some $j$ such that $\#(L\cap T_j)=\infty$. Since
$\overline{T_j}$ is compact there is a point $p\in \Sigma_j$
belonging to the closure of $L$ and which is accumulated by points
in $L$. Since $p \in \Sigma_j$ which is transverse to $\fa$ we
conclude that $L$ has infinitely many plaques intersecting any
distinguished neighborhood of $p$ in $M$ and therefore $L$ cannot be
compact. Conversely, suppose that $L\in\Omega(\fa, T)$ then $L$ has
only finitely many plaques in a (finite) covering of $M$ by
distinguished neighborhoods. Since $M$ is compact this implies that
$L$ is compact.
\end{proof}

Because $\Omega(\fa)=\Omega(\fa,T)=\bigcup\limits_{n\in \mathbb
N}\Omega(\fa,T,n)$ and $\mu(\Omega(\fa))>0$, there is $n\in \mathbb
N$ such that $\mu(\Omega(\fa,T,n))>0$. \vglue.1in

Next we claim:
\begin{Claim} There is are a compact leaf $L_0\in \Omega(\fa, T)$ and
fundamental system of open neighborhoods $V$ of $L_0$ in $M$ such
that

\[
\mu(\Omega(\fa, T,n)\cap V) >0.
\]
\end{Claim}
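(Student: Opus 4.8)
The plan is to reduce the measure statement on $M$ to a purely transverse statement on the complete transversal $T$, to manufacture a good base point there by an essential-support argument, and then to push positive measure back up to $M$ through the local product structure of the foliation. Throughout write $S:=\Omega(\fa,T,n)$, a \emph{saturated} subset of $M$ (a union of leaves) with $\mu(S)>0$, and for each $j$ let $A_j:=S\cap T_j$ denote its trace on the transverse disk $T_j$, with $\mu_{T_j}$ the Lebesgue measure on $T_j$.

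First I would check that each $A_j$ is closed, hence measurable, in $T_j$. Consider the counting function $f\colon T_j\to\mathbb N\cup\{\infty\}$, $f(q)=\#(L_q\cap T)$. Since every transverse intersection of a leaf with $T$ persists under the holonomy when the base point is moved slightly, $f$ is lower semicontinuous, so $A_j=\{f\le n\}$ is closed. Next comes the crucial step: I claim $\mu_{T_j}(A_j)>0$ for some $j$, and I would argue by contraposition. Cover $M$ by finitely many foliated charts $U_\alpha\cong P_\alpha\times Q_\alpha$; since $S$ is saturated, $S\cap U_\alpha=P_\alpha\times S_\alpha$ for a trace $S_\alpha\subset Q_\alpha$, and by Fubini $S\cap U_\alpha$ fails to have zero measure iff $S_\alpha$ has positive Lebesgue measure in $Q_\alpha$. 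Because $T$ is a complete transversal, every plaque of $S_\alpha$ lies on a leaf meeting $T$, so each point of $Q_\alpha$ has a neighborhood on which the holonomy defines a holomorphic diffeomorphism into $T$; by Lindelöf I obtain countably many such holonomy diffeomorphisms covering $Q_\alpha$. Each of them carries $S_\alpha$ into $S\cap T$ and preserves the class of null sets, so if all $A_j$ were null then every $S_\alpha$ would be null, whence $\mu(S)=0$, a contradiction. This holonomy/Fubini step is the main obstacle, since it is precisely where the transverse holomorphy (holonomy maps are diffeomorphisms, hence null-set preserving) and the completeness of $T$ are used.

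With $\mu_{T_j}(A_j)>0$ fixed, I would select $L_0$ by an essential-support argument on $T_j$. Let $O\subset T_j$ be the open set of points having a neighborhood whose intersection with $A_j$ is null; by Lindelöf $\mu_{T_j}(A_j\cap O)=0$, so $A_j\setminus O$ still has positive measure and in particular is nonempty. Choose $p\in A_j\setminus O$ and set $L_0:=L_p$. Then $L_0$ is a compact leaf with $L_0\in\Omega(\fa,T,n)$, and by construction every neighborhood $D$ of $p$ in $T_j$ satisfies $\mu_{T_j}(A_j\cap D)>0$.

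Finally I would transfer this back to $M$. Given any neighborhood $V$ of $L_0$, since $p\in L_0\subset V$ and $V$ is open I may choose a foliated chart $U_p\cong P_p\times Q_p\subset V$ around $p$ whose transversal $Q_p$ is a small disk of $T_j$ centered at $p$. For every $t\in Q_p\cap A_j$ the entire plaque $P_p\times\{t\}$ lies on a leaf of $\Omega(\fa,T,n)$, so $S\cap U_p\supset P_p\times(Q_p\cap A_j)$; since $p\notin O$ we have $\mu_{T_j}(Q_p\cap A_j)>0$, whence Fubini gives $\mu(S\cap U_p)>0$ and therefore $\mu(\Omega(\fa,T,n)\cap V)\ge\mu(S\cap U_p)>0$. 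As $V$ ranges over a fundamental system of neighborhoods of the compact leaf $L_0$, this is exactly the assertion of the claim.
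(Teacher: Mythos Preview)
Your argument is correct, but it takes a considerably longer route than the paper's. The paper works directly on $M$ by contradiction: if no such $L_0$ existed, then every leaf $L$ of $\Omega(\fa,T,n)$ would admit an open neighborhood $W_L$ with $\mu(W_L\cap\Omega(\fa,T,n))=0$; the $W_L$ cover the set $\Omega(\fa,T,n)$, a countable subcover exists by Lindel\"of, and hence $\mu(\Omega(\fa,T,n))=0$, a contradiction. No passage to the transversal, no Fubini, no holonomy transfer is needed. By contrast, you push the positive measure down to $T$ via Fubini and holonomy, run the essential-support argument on $T_j$, and then push positive measure back up to $M$; the step you flagged as the ``main obstacle'' simply does not arise in the paper's approach. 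What your route buys is that it directly produces a base point $p\in T_j$ with the property that every transverse disk around $p$ meets $A_j$ in positive measure---which is exactly the information used in the next step of the proof of Theorem~\ref{Theorem:measuremainfoliations}---whereas the paper establishes the claim on $M$ first and then separately selects such a $p$ afterward.
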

\begin{proof}
 Indeed, otherwise for each compact leaf $L\in \Omega(\fa)$ and
for each neighborhood $V_L$ of $L$ in $M$ there is a neighborhood
$W_L \subset V_L$ of $L$ in $M$ such that $\mu(W_L \cap
\Omega(\fa,T,n))=0$. In particular there is an open cover
$\Omega(\fa,T,n) \subset \bigcup\limits_{L\in \Omega(\fa,T,n)} W_L$
such that $\mu(W_L \cap \Omega(\fa,T,n))=0$. The open cover admits a
countable subcover so that we have $ \Omega(\fa,T,n) \subset
\bigcup\limits_{n \in \mathbb N} W_n$ with $\mu(W_n)=0, \, \forall n
\in \mathbb N$. This implies $\mu(\Omega(\fa,T,n))=0$, a
contradiction.
\end{proof}
Let therefore $L_0\in \Omega(\fa,T,n)$ be as above. We may choose a
base point $p\in L_0\cap T$ and a transverse disc $\Sigma_p\subset
{\overline\Sigma_p}\subset T$ to $\fa$ centered at $p$. Given a
point $z \in \Sigma_p$ we denote the leaf through $z$ by $L_z$. If
$L_z \in \Omega(\fa,T,n)$ then $ \# (L_z \cap \Sigma_p) \leq n$.



Take now a holonomy map germ $h \in \Hol(\fa, L_0, \Sigma_p,p)$. Let
us choose a sufficiently small subdisk $W\subset \Sigma_p$ such that
the  germ $h$ has a representative $h\colon W \to \Sigma_p$ such
that the iterates  $h,h^2,...,h^{n+1}$ are defined in $W$. Because
of the claim above we have $\mu(\{z\in W: \# L_z \cap \Sigma_p \leq
n\})>0.$

Put $X=:\{z\in W: \# L_z \cap \Sigma \leq n\}$.  Given a  point $z
\in X$ we have $h^\ell (z) = z$ for some $\ell \leq n$. Therefore
there is  $n_h \leq n$ such that
\[
\mu(\{z\in W: h^{n_h}(z)=z\})>0
\]

Since $h$ is analytic, the set $\{z \in W : h^{n_h}(z)=z\}$ is an
analytic subset of $W$ and therefore either  this coincides with $W$
or (it has codimension $\geq 2$ and therefore) this is a zero measure subset of $W$. We conclude that
$h^{n_h}=\Id$ in $W$. This shows that each germ $h \in \Hol(\fa,
L_0, \Sigma_p, p)$ is periodic of order $n_h \leq n$ for some
uniform $n \in \mathbb N$. This implies that this holonomy group is
finite by Proposition~\ref{Proposition:finitenesslemma}.
\end{proof}

\begin{proof}[Proof of Theorem~\ref{Theorem:maingroups}]
 Thanks to Burnside's theorem (\ref{Theorem:Burnside}) and and Proposition~\ref{Proposition:finitenesslemma}
 it is enough to prove the following
 claim:
\begin{Claim}  $G$ is a periodic group of finite exponent.
\end{Claim}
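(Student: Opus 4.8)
The plan is to prove the Claim under the standing hypothesis of this case of Theorem~\ref{Theorem:maingroups}, namely that alternative (ii) fails, so $\mu(\Omega(G))>0$. I want to show that this forces a \emph{uniform} bound on the order of every element of $G$; periodicity of $G$ then follows at once, and the uniform bound is exactly what promotes it to finite exponent. First I would stratify $\Omega(G)$ by orbit size, setting
\[
\Omega(G,n)=\{x\in F:\#\mathcal O_G(x)\le n\},\qquad n\in\bn,
\]
so that $\Omega(G)=\bigcup_{n\in\bn}\Omega(G,n)$. By the additivity of the zero-measure property recorded at the opening of this section, $\mu(\Omega(G))>0$ yields a single index $n\in\bn$ with $\mu(\Omega(G,n))>0$. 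This fixed $n$ is the crux: it must simultaneously control every element of $G$.

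Next I would fix an arbitrary $g\in G$ and exploit the positive-measure stratum. For any $x\in\Omega(G,n)$ the $n+1$ points $x,g(x),\dots,g^{n}(x)$ all lie in the orbit $\mathcal O_G(x)$, which has at most $n$ elements, so two of them coincide and hence $g^{k}(x)=x$ for some integer $1\le k\le n$. Consequently
\[
\Omega(G,n)\subset\bigcup_{k=1}^{n}\{x\in F:g^{k}(x)=x\}.
\]
Since the left-hand side has positive measure and the union on the right is finite, at least one set $\{x\in F:g^{k}(x)=x\}$ has positive measure (otherwise each, and hence their union, would be of zero measure, and so would the subset $\Omega(G,n)$). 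But $g^{k}\colon F\to F$ is holomorphic, so this fixed-point set is an analytic subset of the connected manifold $F$; being of positive measure it cannot be a proper analytic subset --- such a set has zero measure --- and therefore it equals $F$. Thus $g^{k}=\Id$ on $F$ for some $k\le n$, and in particular $\mathrm{ord}(g)\le n$.

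Because $g\in G$ was arbitrary, every element of $G$ has order at most $n$, so $g^{m}=\Id$ for all $g\in G$ with $m=\operatorname{lcm}(1,\dots,n)$; this is precisely the assertion that $G$ is periodic of finite exponent, which proves the Claim. I expect the one genuine subtlety --- as opposed to the routine measure bookkeeping --- to be the passage from ``each orbit is finite'' to a bound that is uniform over the whole group: it is the positive measure of $\Omega(G)$, distributed over the countably many strata $\Omega(G,n)$, that pins down one stratum of positive measure and hence one bound $n$ valid for all $g$ at once. The rigidity of holomorphic maps --- a coincidence set $\{x:g^{k}(x)=x\}$ of positive measure must be everything --- then converts this into the uniform exponent, and is exactly where holomorphicity rather than mere smoothness is indispensable, mirroring the corresponding step in the proof of Theorem~\ref{Theorem:measuremainfoliations}.
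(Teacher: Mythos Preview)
Your proof is correct and follows essentially the same route as the paper: stratify $\Omega(G)$ by orbit size to fix a single $n$ with $\mu(\Omega(G,n))>0$, then for each $g\in G$ use pigeonhole to land in a fixed-point set $\{g^{k}=\Id\}$ of positive measure and invoke analytic rigidity to conclude $g^{k}=\Id$ globally. The paper phrases the intermediate step as $\#\mathcal O_{f}(x)\le k$ (since $\mathcal O_{f}(x)\subset\mathcal O_{G}(x)$) before extracting $k_{f}\le k$, but this is the same pigeonhole observation you spell out explicitly.
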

\begin{proof}[proof of the claim]
We have $\Omega(G)=\{x \in F: \# \mathcal O_G(x)< \infty\}=
\bigcup\limits_{k=1} ^\infty \{x \in F: \# \mathcal O_G(x)\leq k\}$,
therefore there is some $k\in \mathbb N$ such that
\[
\mu(\{x \in F: \# \mathcal O_G(x) \leq k\})>0.
\]
In particular, given any diffeomorphism $f \in G$ we have

\[
\mu(\{x \in F: \# \mathcal O_f(x) \leq k\})>0.
\]

Therefore, there is $k_f \leq k$ such that the set $X=\{x \in F:
f^{k_f}(x)=x\}$ has positive measure. Since $X\subset F$ is an
analytic subset, this implies that $X=F$ (a proper analytic subset
of a connected complex  manifold has (codimension $\geq 2$ and therefore it has) zero measure in $F$). Therefore, we have $f^{k_f}=\Id$ in $F$. This shows that $G$ is periodic of finite
exponent.
\end{proof}
\end{proof}


\vglue.3in

\begin{tabular}{ll}
 Bruno Sc\'ardua:  scardua@im.ufrj.br\\
Instituto de  Matem\'atica -  Universidade Federal do Rio de Janeiro\\
Rio de Janeiro - RJ,   Caixa Postal 68530\\
21.945-970 Rio de Janeiro-RJ \\
BRAZIL
\end{tabular}

\end{document}